\newtheorem{theorem}{Theorem}[section]
\newtheorem{lemma}[theorem]{Lemma}
\newtheorem{corollary}[theorem]{Corollary}
\newtheorem{remark}[theorem]{Remark}
\newtheorem{example}[theorem]{Example}
\numberwithin{equation}{section}
\newcommand{\IR}{\mathbb{R}}
\newcommand{\IB}{\mathbb{B}}
\newcommand{\Bn}{ {\mathbb{B}^n} }
\newcommand{\Rn}{ {\mathbb{R}^n} }
\newcommand{\beq}{\begin{equation}}
\newcommand{\eeq}{\end{equation}}
\renewcommand{\tanh}{\,\textnormal{tanh}}
\renewcommand{\sinh}{\,\textnormal{sinh}}
\begin{document}
\vspace*{-2cm}
\title[Geometry of the Cassinian metric and its inner metric]
{Geometry of the Cassinian metric and its inner metric}

\date{\today}

\author[Z. Ibragimov]{Zair Ibragimov
}
\address{Zair Ibragimov, Department of Mathematics, California State University,
Fullerton, CA, 92834
}
\email{zibragimov@fullerton.edu}

\author[M. R. Mohapatra]{Manas Ranjan Mohapatra}
\address{Manas Ranjan Mohapatra, Discipline of Mathematics,
Indian Institute of Technology Indore,
Indore 452 017, India
}
\email{mrm.iiti@gmail.com}

\author[S. K. Sahoo]{Swadesh Kumar Sahoo}
\address{Swadesh Kumar Sahoo, Discipline of Mathematics,
Indian Institute of Technology Indore,
Indore 452 017, India}
\email{swadesh@iiti.ac.in}

\author[X.-H. Zhang]{Xiaohui Zhang}
\address{Xiaohui Zhang, Department of Physics and  Mathematics, University of Eastern Finland, 80101 Joensuu, Finland}
\email{xiaohui.zhang@uef.fi}

\begin{abstract}
The Cassinian metric and its inner metric have been studied for subdomains
of the $n$-dimensional Euclidean space $\IR^n$ ($n\ge 2$) by the first named author. In this paper we obtain
various inequalities between the Cassinian metric and other related metrics in some
specific subdomains of $\IR^n$. Also, a sharp distortion property of the Cassinian metric
under M\"obius transformations of the unit ball is obtained.
\\

\smallskip
\noindent
{\bf 2010 Mathematics Subject Classification}. 30C35, 30C20, 30F45, 51M10.

\smallskip
\noindent
{\bf Key words and phrases.}
M\"obius transformation, the hyperbolic metric,
the Cassinian metric, the distance ratio metric,
the visual angle metric, the triangular ratio metric, inner metric.
\end{abstract}

\maketitle
\thispagestyle{empty}
\section{Introduction}
One of the aspects of hyperbolic geometry deals with the comparison of
the hyperbolic metric with the so-called hyperbolic-type
metrics. Secondly, invariance and distortion properties of hyperbolic-type metrics
under conformal maps (M\"obius transformations in higher dimensions) also play significant roles
in geometric function theory. In recent years, many authors have contributed to
the study of hyperbolic-type metrics. Some of the familiar hyperbolic-type metrics are
the quasihyperbolic metric \cite{GO79,GP76}, the distance ratio metric \cite{Vuo85},
the Apollonian metric \cite{Bea98,BI08,Has03,HI05,HI07,Ibr03,Ibr2003}, the Seittenranta metric \cite{Sei99},
the Ferrand metric \cite{Fer88,HIL06,HIMPS07}, the K--P metric \cite{HIMPS07,HIM08,KP94}, the Cassinian metric \cite{Ibr09},
the visual angle metric \cite{KLVW}, and the triangular ratio metric \cite{CHKV}.
These metrics are also referred to as the {\em relative metrics} since that they are defined in a proper subdomain of the Euclidean space $\IR^n$, $n\ge 2$, relative to its boundary.
A more general form of relative metrics has been considered by P. H\"ast\"o in \cite[Lemma~6.1]{Has02}.
In this paper we study geometric properties of the Cassinian metric
by comparing it with the hyperbolic, distance ratio, and visual angle metrics.
For a quick overview on these metrics, the reader can refer to the next section.
We also discuss the quasi-invariance (distortion)
property of the Cassinian metric under M\"obius transformations of the unit
ball. Finally, we compute the inner metric of the Cassinian metric, the so-called
{\em inner Cassinian metric}, in some specific subdomains of $\mathbb{R}^n$ and study some of its basic properties.

\section{Preliminaries}\label{Intro}

Throughout the paper $D$ denotes an arbitrary, proper subdomain of the Euclidean space $\IR^n$, i.e., $D\subsetneq \IR^n$. 
The Euclidean distance between $x,y\in\IR^n$ is denoted by $|x-y|$. The standard Euclidean norm of a point 
$x\in\IR^n$ is denoted by $|x|$. Given $x\in\IR^n$ and $r>0$, the open ball centered at $x$ and of radius $r$ is denoted by $B(x,r)=\{y\in\IR^n\colon\, |x-y|<r\}$. 
The unit ball in $\IR^n$ is denoted by $\IB^n$. 
The closed line segment between two points $x$ and $y$ in $\mathbb{R}^n$ is denoted by $[x,y]$.
Given $x\in D$, the distance $\delta_D(x)$ from $x$ to the boundary $\partial D$ of $D$ is given by
$$
\delta_D(x)=\inf\big\{|x-\xi|\colon\, \xi\in\partial D\big\}.
$$
For real numbers $r$ and $s$, we set $r\vee s=\max\{r,\, s\}$ and $r\wedge s=\min\{r,\, s\}$.

The {\it Cassinian metric} $c_D$ of the domain $D$ is defined as
$$
c_D (x,y)=\sup_{p\in \partial D} \frac{|x-y|}{|x-p||p-y|} .
$$
This metric was first introduced and studied in \cite{Ibr09}.
However, a more general form of this metric was considered by P. H\"ast\"o (see \cite[Lemma~6.1]{Has02}).

The {\it distance ratio metric $j_D$} is defined by
$$
j_D(x,y)=\log\left(1+\frac{|x-y|}{\delta_D(x)\wedge\delta_D(y)}\right).
$$
The above form of the metric $j_D$, which was first considered in \cite{Vuo85},
is a slight modification of the original distance ratio metric introduced in \cite{GO79,GP76}.
This metric has been widely studied in the literature; see, for instance, \cite{Vuo88}.

The {\it{hyperbolic metric}} $\rho_{\IB^n}$ of the unit ball $\IB^n$ is given by
$$
\rho_{\IB^n}(x,y)=\inf_\gamma\int_\gamma \frac{2|{\rm d}z|}{1-|z|^2},
$$
where the infimum is taken over all rectifiable curves $\gamma\subset \IB^n$
joining $x$ and $y$.

The {\it{visual angle metric}} $v_D$, introduced in \cite{KLVW}, is defined by
$$
v_D (x,y)=\sup \{\angle(x,z,y):z\in \partial D\}.
$$
We also consider the quantity $p_D$,
$$
p_D(x,y)=\frac{|x-y|}{\sqrt{|x-y|^2+4\delta_D(x)\delta_D(y)}}.
$$
Note that the quantity $p_D$, which was first considered in \cite{CHKV}, does not
define a metric (see \cite[Remark~3.1]{CHKV}).
However, it has a nice connection with the hyperbolic metric, $\rho_{\mathbb{H}^2}$, of
the upper half-plane $\mathbb{H}^2=\{(x,y)\in\mathbb R^2\colon\, y>0\}$. Namely,
$$
p_{\mathbb{H}^2}(z_1,z_2)=\tanh\frac{\rho_{\mathbb{H}^2}(z_1,z_2)}{2}=\frac{|z_1-z_2|}{|z_1-\bar{z}_2|}, \quad z_1,z_2\in \mathbb{H}^2,
$$
where $\bar{z}_2$ is the reflection of $z_2$ with respect to the real line $\mathbb{R}$ (see \cite{CHKV}).
Hence it is natural to ask whether the quantity $p_D$ is comparable with hyperbolic-type metrics such as, the Cassinian metric $c_D$,
in more general domains $D$.

\section{Comparison of the Cassinian metric with other related quantities}

This section is devoted to finding upper and lower bounds for the Cassinian metric in terms of
the quantities, defined in Section~\ref{Intro}, in some specific domains.
We begin with the comparison of the Cassinian and hyperbolic metrics of the unit ball
$\IB^n$. Recall that for all $x,y\in \Bn$
\begin{equation}\label{fsinh}
\sinh\left(\frac{\rho_{\Bn}(x,y)}{2}\right)=\frac{|x-y|}{\sqrt{(1-|x|^2)(1-|y|^2)}},
\end{equation}
(see, for example, \cite[p.~40]{Bea95}).

\begin{theorem}
For $x,y\in\Bn$, we have
\begin{equation}\label{rho+c}
\sinh\left(\frac{\rho_{\Bn}(x,y)}{2}\right)\leq c_{\Bn}(x,y).
\end{equation}
\end{theorem}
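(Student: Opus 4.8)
The plan is to exploit that $c_{\Bn}(x,y)$ is a \emph{supremum} over boundary points: to get the lower bound it is enough to exhibit one single point $p\in\partial\Bn$ for which $|x-y|/(|x-p|\,|p-y|)$ is already at least $\sinh(\rho_{\Bn}(x,y)/2)$. Using the closed form \eqref{fsinh} for the hyperbolic distance, this reduces the whole theorem to producing a boundary point $p$ with
\[
|x-p|\,|p-y|\le\sqrt{(1-|x|^2)(1-|y|^2)}.
\]
The case $x=y$ is trivial (both sides of \eqref{rho+c} vanish), so I would assume $x\neq y$ from here on.

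The key construction is to take the Euclidean line $L$ through $x$ and $y$. Since $x,y$ are interior points, $L$ meets $\partial\Bn$ in exactly two points $p_1,p_2$, and both $x$ and $y$ lie strictly between them on $L$. The main tool is then the power of a point with respect to the unit sphere: applying it to $x$ and to $y$ along the common chord $[p_1,p_2]$ gives
\[
|x-p_1|\,|x-p_2|=1-|x|^2,\qquad |y-p_1|\,|y-p_2|=1-|y|^2 .
\]
Multiplying these two identities and regrouping the four factors according to which boundary point they involve yields
\[
\bigl(|x-p_1|\,|y-p_1|\bigr)\bigl(|x-p_2|\,|y-p_2|\bigr)=(1-|x|^2)(1-|y|^2).
\]

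Writing $A=|x-p_1|\,|y-p_1|$ and $B=|x-p_2|\,|y-p_2|$, the last line reads $AB=(1-|x|^2)(1-|y|^2)$, and the elementary inequality $\min\{A,B\}\le\sqrt{AB}$ forces (at least) one of $p_1,p_2$, call it $p$, to satisfy $|x-p|\,|p-y|=\min\{A,B\}\le\sqrt{(1-|x|^2)(1-|y|^2)}$. Inserting this $p$ into the definition of $c_{\Bn}$ and invoking \eqref{fsinh} then delivers \eqref{rho+c}. I do not anticipate a genuine obstacle: the one real idea is to choose the two candidate boundary points as the endpoints of the chord through $x$ and $y$ and to combine the two power-of-a-point relations, after which the argument is just the one-line bound "$\min\le$ geometric mean". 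The only point I would verify with care is the geometric claim that $L$ meets the sphere in exactly two points with $x,y$ strictly between them, so that the power-of-a-point identities hold in the stated unsigned form.
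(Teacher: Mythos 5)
Your proof is correct, and it takes a genuinely different route from the paper's. Both arguments reduce the theorem, via \eqref{fsinh}, to producing a single boundary point $p$ with $|x-p|\,|p-y|\le\sqrt{(1-|x|^2)(1-|y|^2)}$, but they produce it differently. The paper argues asymmetrically: assuming without loss of generality $|x|\le|y|$ (and treating $y=0$ separately), it takes $p=y/|y|$, the radial projection of the farther point, and checks the chain $|x-p|\,|y-p|=(1-|y|)\,|x-y/|y||\le(1-|y|)(1+|x|)\le\sqrt{(1-|x|^2)(1-|y|^2)}$, where the last step is equivalent, after squaring and simplifying, to $|x|\le|y|$. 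Your chord construction instead treats $x$ and $y$ symmetrically: the two power-of-a-point identities along the chord through $x$ and $y$ (which are valid in the unsigned form you state, since both points lie strictly between the chord's endpoints) combine into the exact identity $\bigl(|x-p_1|\,|y-p_1|\bigr)\bigl(|x-p_2|\,|y-p_2|\bigr)=(1-|x|^2)(1-|y|^2)$, after which ``$\min\le$ geometric mean'' hands you a suitable endpoint. Your version needs no normalization $|x|\le|y|$ and no case split at $y=0$ (only at $x=y$, which is trivial), and the exact product identity is slightly stronger information than the paper's chain of inequalities --- it shows the geometric mean of the two candidate products equals $\sqrt{(1-|x|^2)(1-|y|^2)}$ precisely. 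What the paper's route buys in exchange is an utterly explicit choice of boundary point and machinery no heavier than the triangle inequality. Both proofs are complete.
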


\begin{proof}
Without loss of generality, we may assume that $|y|\geq|x|$.  It is trivial that the inequality \eqref{rho+c} holds for $y=0$,
since $x=0$ in this case also.

Hence we assume that $y\neq0$. It is easy to see that
\allowdisplaybreaks\begin{align*}
\inf_{z\in\partial \Bn}|x-z||y-z| & \leq\left|x-\frac{y}{|y|}\right|\left|y-\frac{y}{|y|}\right|\\
                                  & =(1-|y|)\left|x-\frac{y}{|y|}\right|\\
                                  & \leq (1-|y|)(1+|x|)\\
                                  & \leq \sqrt{(1-|x|^2)(1-|y|^2)},
\end{align*}
where the last inequality follows since $|x|\le |y|$. Now, the formula (\ref{fsinh}) easily yields
$$
\sinh\left(\frac{\rho_{\Bn}(x,y)}{2}\right) 
\leq \frac{|x-y|}{\inf\limits_{z\in\partial \Bn}|x-z||y-z|} =  c_{\Bn}(x,y).
$$
Hence the proof is complete.
\end{proof}

\begin{remark}
Inequality \eqref{rho+c} is sharp in the following sense. For $0$ and
$x$ in $\Bn$, we use the formulae 
\begin{equation}\label{sec3-eq1}
c_{\IB^n}(0,x)=\frac{|0-x|}{|1-0||1-x|}=\frac {|x|}{1-|x|}
\end{equation}
$($see \cite[Example 3.9(B)]{Ibr09}$)$ and $(\ref{fsinh})$. It follows that
$$
\frac{\sinh\left(\displaystyle\frac{\rho_{\Bn}(0,x)}{2}\right)}{c_{\Bn}(0,x)}=\frac{1-|x|}{\sqrt{1-|x|^2}}
$$
approaches $1$ as $x$ approaches $0$.
\end{remark}
It is well known that $\sinh x\geq x$ for all $x\geq0$.
This leads to
\begin{corollary}\label{cor}
For $x,y\in\Bn$, we have the following sharp inequality
$$
\rho_{\Bn}(x,y)\leq 2c_{\Bn}(x,y).
$$
\end{corollary}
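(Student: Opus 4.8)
The plan is to deduce the inequality directly from the preceding theorem, which has already been established, together with the elementary estimate $\sinh t\ge t$ for $t\ge 0$ quoted in the sentence immediately before the corollary. Concretely, I would set $t=\rho_{\Bn}(x,y)/2$, which is nonnegative, and chain the two bounds:
\[
\frac{\rho_{\Bn}(x,y)}{2}\le \sinh\!\left(\frac{\rho_{\Bn}(x,y)}{2}\right)\le c_{\Bn}(x,y),
\]
where the first inequality is the elementary estimate and the second is \eqref{rho+c}. Multiplying through by $2$ yields $\rho_{\Bn}(x,y)\le 2c_{\Bn}(x,y)$, which is the asserted inequality. No new geometric input is needed beyond the theorem.

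It remains to justify the word \emph{sharp}, i.e.\ that the constant $2$ cannot be replaced by anything smaller. For this I would exhibit a family of point pairs along which the ratio $\rho_{\Bn}/c_{\Bn}$ tends to $2$. The natural choice, following the spirit of the Remark, is to take $x=0$ and let $y\to 0$. Here I would use the explicit value $c_{\Bn}(0,y)=|y|/(1-|y|)$ from \eqref{sec3-eq1} together with the one-dimensional formula $\rho_{\Bn}(0,y)=2\arctanh|y|$, giving
\[
\frac{\rho_{\Bn}(0,y)}{2c_{\Bn}(0,y)}=\frac{(1-|y|)\arctanh|y|}{|y|}.
\]
As $|y|\to 0$ one has $\arctanh|y|/|y|\to 1$ and $(1-|y|)\to 1$, so the right-hand side tends to $1$; equivalently $\rho_{\Bn}(0,y)/c_{\Bn}(0,y)\to 2$. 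This shows the factor $2$ is optimal.

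The argument is essentially immediate, so there is no serious obstacle; the only point requiring a small computation is the sharpness limit, which rests on the standard small-argument asymptotic $\arctanh t = t + O(t^3)$ as $t\to 0$. I would state that limit explicitly rather than leave it implicit, since it is exactly what forces the ratio to approach $2$ and hence distinguishes the present statement from the weaker claim that some constant works.
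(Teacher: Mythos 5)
Your proof is correct and follows the same route as the paper: the inequality is obtained by chaining $\rho_{\Bn}(x,y)/2\le\sinh(\rho_{\Bn}(x,y)/2)\le c_{\Bn}(x,y)$ via the elementary bound $\sinh t\ge t$, exactly as the paper does. Your sharpness verification (taking $x=0$, $y\to 0$ and using $c_{\Bn}(0,y)=|y|/(1-|y|)$ together with $\rho_{\Bn}(0,y)=2\arctanh|y|$) is also the same computation implicit in the paper's preceding Remark, just written out explicitly.
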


Next, we compare the Cassinian metric and the distance ratio metric in proper subdomains of $\Rn$.
\begin{theorem}\label{sec3-thm3}
Let $D$ be a proper subdomain of $\Rn$ and let $x,y\in D$. Then
$$j_D(x,y)\leq \big(|x-y|+\delta_D(x)\wedge \delta_D(y)\big)c_D(x,y).
$$
\end{theorem}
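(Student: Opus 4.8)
The plan is to bound $c_D(x,y)$ from below by retaining a single, well-chosen boundary point, and then to reduce the whole claim to the elementary inequality $\log(1+t)\le t$. Write $m=\delta_D(x)\wedge\delta_D(y)$ and $a=|x-y|$, and assume without loss of generality that $\delta_D(x)\le\delta_D(y)$, so that $m=\delta_D(x)$. Since $D$ is a proper subdomain, $\partial D$ is a nonempty closed set, and the continuous function $\xi\mapsto|x-\xi|$ attains its infimum over $\partial D$; hence there is a point $p_0\in\partial D$ with $|x-p_0|=\delta_D(x)=m$.

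First I would discard the supremum in the definition of $c_D$ by keeping only the term coming from $p_0$:
$$
c_D(x,y)=\sup_{p\in\partial D}\frac{|x-y|}{|x-p|\,|p-y|}\ \ge\ \frac{a}{|x-p_0|\,|p_0-y|}=\frac{a}{m\,|p_0-y|}.
$$
Next I would control the remaining factor $|p_0-y|$ by the triangle inequality, $|p_0-y|\le|p_0-x|+|x-y|=m+a$. Combining these two estimates gives the lower bound
$$
c_D(x,y)\ \ge\ \frac{a}{m(m+a)},
$$
and multiplying through by $(a+m)$ yields $(a+m)\,c_D(x,y)\ge a/m$.

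Finally, I would invoke $\log(1+t)\le t$, valid for all $t\ge 0$, with $t=a/m$, to obtain
$$
j_D(x,y)=\log\!\left(1+\frac{a}{m}\right)\le\frac{a}{m}\le(a+m)\,c_D(x,y),
$$
which is exactly the asserted inequality once we substitute back $a=|x-y|$ and $m=\delta_D(x)\wedge\delta_D(y)$.

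I do not expect a genuine obstacle here; the entire content of the argument is the single correct choice of test point $p_0$ together with the triangle inequality linking the two distances $|x-p_0|$ and $|p_0-y|$. The only point deserving care is the claim that the boundary distance $\delta_D(x)$ is actually realized by some $p_0\in\partial D$; if one prefers to avoid this, the same conclusion follows by choosing a minimizing sequence $p_k\in\partial D$ with $|x-p_k|\to m$ and passing to the limit in the displayed lower bound for $c_D(x,y)$.
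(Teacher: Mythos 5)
Your proof is correct and follows essentially the same route as the paper: both arguments pick the boundary point $p_0$ realizing $\delta_D(x)$ (assuming WLOG $\delta_D(x)\le\delta_D(y)$), bound $|p_0-y|\le |x-y|+\delta_D(x)$ by the triangle inequality to get $c_D(x,y)\ge |x-y|/\bigl(\delta_D(x)(|x-y|+\delta_D(x))\bigr)$, and finish with $\log(1+t)\le t$. Your added remark about attainment of the infimum (or using a minimizing sequence) is a careful touch the paper glosses over, but it is not a different method.
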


\begin{proof}
We may assume that $\delta_D(x)\wedge \delta_D(y)=\delta_D(x)$.
Choose $z\in\partial D$ such that $\delta_D(x)=|x-z|$. By the triangle inequality, we have that
$$
\inf_{p\in\partial D}|x-p||y-p| \leq |x-z||y-z| \leq \delta_D(x)(|x-y|+\delta_D(x)),
$$
and
\allowdisplaybreaks\begin{align*}
c_D(x,y) & \geq \frac{|x-y|}{\delta_D(x)(|x-y|+\delta_D(x))}\\
           & \geq \frac{1}{|x-y|+\delta_D(x)}\log\left(1+\frac{|x-y|}{\delta_D(x)}\right)\\
           & = \frac{1}{|x-y|+\delta_D(x)}j_D(x,y).
\end{align*}
This completes the proof of our theorem.
\end{proof}

\begin{corollary}
For $x,y\in \Bn$, we have
$$
j_\Bn(x,y)\leq (1+|x|\wedge|y|)c_\Bn(x,y)\leq 2c_\Bn(x,y).
$$
In particular,
$$
j_\Bn(0,x)\leq c_\Bn(0,x).
$$
\end{corollary}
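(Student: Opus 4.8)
The plan is to specialize Theorem~\ref{sec3-thm3} to the unit ball and then reduce the coefficient comparison to the triangle inequality. First I would record that for $D=\Bn$ the boundary distance is $\delta_{\Bn}(x)=1-|x|$, so that $\delta_{\Bn}(x)\wedge\delta_{\Bn}(y)=1-(|x|\vee|y|)$. Substituting this into Theorem~\ref{sec3-thm3} yields
$$
j_{\Bn}(x,y)\leq\bigl(|x-y|+1-(|x|\vee|y|)\bigr)c_{\Bn}(x,y),
$$
so it remains only to bound the coefficient $|x-y|+1-(|x|\vee|y|)$ from above by $1+|x|\wedge|y|$.

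The key observation is the elementary identity $(|x|\vee|y|)+(|x|\wedge|y|)=|x|+|y|$, which rewrites the desired coefficient bound $|x-y|+1-(|x|\vee|y|)\leq 1+|x|\wedge|y|$ in the equivalent form $|x-y|\leq|x|+|y|$. The latter is just the triangle inequality applied at the origin, so it holds automatically. This establishes the first inequality $j_{\Bn}(x,y)\leq(1+|x|\wedge|y|)c_{\Bn}(x,y)$.

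For the second inequality I would invoke $x,y\in\Bn$, which gives $|x|<1$ and $|y|<1$, hence $|x|\wedge|y|<1$ and therefore $1+|x|\wedge|y|<2$; multiplying by the nonnegative quantity $c_{\Bn}(x,y)$ produces $(1+|x|\wedge|y|)c_{\Bn}(x,y)\leq 2c_{\Bn}(x,y)$. Finally, the special case $j_{\Bn}(0,x)\leq c_{\Bn}(0,x)$ follows by setting $y=0$, since then $|x|\wedge|y|=0$ and the coefficient collapses to $1$.

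I do not expect any genuine obstacle here, as the statement is a direct consequence of Theorem~\ref{sec3-thm3}. The only point requiring a moment's thought is recognizing the max--plus--min identity, which is precisely what converts the coefficient comparison into the triangle inequality.
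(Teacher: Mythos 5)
Your proof is correct and follows essentially the same route as the paper: both specialize Theorem~\ref{sec3-thm3} to $\Bn$, use the triangle inequality $|x-y|\leq|x|+|y|$ together with the identity $|x|+|y|-|x|\vee|y|=|x|\wedge|y|$ to bound the coefficient by $1+|x|\wedge|y|$, and then note this is at most $2$ on the unit ball (with the $y=0$ case giving coefficient $1$). The only difference is presentational --- you rearrange the target inequality down to the triangle inequality, while the paper chains the same estimates forward --- so there is nothing substantive to distinguish the two arguments.
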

\begin{proof}
Since $x,y\in\Bn$, by Theorem~\ref{sec3-thm3} we observe that
\allowdisplaybreaks\begin{align*}
|x-y|+\delta_D(x)\wedge \delta_D(y) & \leq |x|+|y|+(1-|x|)\wedge (1-|y|)\\
           & = |x|+|y|+1-|x|\vee|y|\\
           & = 1+|x|\wedge|y|.
\end{align*}
The desired inequalities follow.

\end{proof}
\begin{lemma}
For all $x,y\in \IB^n$ with $|x|\vee |y|\leq\lambda<1$
we have
\begin{equation}\label{cj}
c_{\IB^n}(x,y)\le \frac{1}{(1-\lambda)^2}j_{\IB^n}(x,y).
\end{equation}
\end{lemma}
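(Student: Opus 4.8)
The plan is to bound $c_{\IB^n}$ from above by an explicit elementary quantity and then compare that quantity with $j_{\IB^n}$. First I would observe that every boundary point $p\in\partial\IB^n$ satisfies $|x-p|\ge 1-|x|=\delta_{\IB^n}(x)$ and $|y-p|\ge 1-|y|=\delta_{\IB^n}(y)$, so each quotient defining $c_{\IB^n}(x,y)$ is at most $|x-y|/\big((1-|x|)(1-|y|)\big)$. Taking the supremum over $p$ gives
$$
c_{\IB^n}(x,y)\le\frac{|x-y|}{(1-|x|)(1-|y|)}.
$$
In view of this, \eqref{cj} will follow once I show
$$
(1-\lambda)^2\,\frac{|x-y|}{(1-|x|)(1-|y|)}\le \log\!\Big(1+\frac{|x-y|}{(1-|x|)\wedge(1-|y|)}\Big)=j_{\IB^n}(x,y).
$$

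Next I would set $a=(1-|x|)\wedge(1-|y|)$ and $b=(1-|x|)\vee(1-|y|)$, so that $(1-|x|)(1-|y|)=ab$, one has $b\ge a\ge 1-\lambda$, and the right-hand side above equals $\log(1+|x-y|/a)$. Writing $t=|x-y|/a$ and using $b\ge 1-\lambda$ to estimate $(1-\lambda)^2/b\le 1-\lambda$, the displayed inequality reduces to the one-variable statement $(1-\lambda)\,t\le\log(1+t)$. The point is that $t$ is not arbitrary: since $|x-y|\le|x|+|y|\le 2\lambda$ and $a\ge 1-\lambda$, we have $0\le t\le 2\lambda/(1-\lambda)$, and I only need the inequality on this bounded range.

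The main obstacle is precisely this last inequality, because $(1-\lambda)t\le\log(1+t)$ is \emph{false} for large $t$ (the left side is linear while the right side is logarithmic), so the crude comparison has to be replaced by one that genuinely exploits the bound $t\le 2\lambda/(1-\lambda)$. I would handle it by concavity: the function $h(t)=\log(1+t)-(1-\lambda)t$ satisfies $h''(t)=-1/(1+t)^2<0$, so on $[0,\,2\lambda/(1-\lambda)]$ it lies above the chord joining its endpoint values and is therefore nonnegative as soon as both endpoints are nonnegative. At $t=0$ one has $h(0)=0$, while at the right endpoint
$$
h\!\left(\frac{2\lambda}{1-\lambda}\right)=\log\frac{1+\lambda}{1-\lambda}-2\lambda=2\big(\arctanh\lambda-\lambda\big)\ge 0,
$$
since $\arctanh\lambda=\lambda+\lambda^3/3+\cdots\ge\lambda$ for $0\le\lambda<1$. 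This gives $h\ge 0$ on the whole interval, hence the reduced inequality and with it \eqref{cj}.
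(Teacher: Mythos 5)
Your proof is correct, and it shares the paper's overall two-step shape---bound $c_{\IB^n}(x,y)$ above by $|x-y|$ divided by a product of boundary distances, then bound the logarithm defining $j_{\IB^n}$ below by a linear function of $|x-y|$---but the second step is closed by a genuinely different device. The paper (after assuming $|y|=|x|\vee|y|$) notes $|x-w||y-w|\ge(1-\lambda)^2$ for all $w\in\partial\IB^n$, hence $(1-\lambda)^2c_{\IB^n}(x,y)\le|x-y|$, and then derives the $\lambda$-free intermediate inequality $j_{\IB^n}(x,y)\ge|x-y|$ from the standard bound $\log(1+t)\ge 2t/(2+t)$ combined with $|x-y|\le 2|y|$; this finishes immediately. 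You instead keep the sharper denominator $(1-|x|)(1-|y|)$, reduce to the linear comparison $(1-\lambda)t\le\log(1+t)$, and---correctly observing that such a linear minorant fails for large $t$---prove it only on the achievable range $0\le t\le 2\lambda/(1-\lambda)$, using concavity of $h(t)=\log(1+t)-(1-\lambda)t$ together with the endpoint evaluation $h\bigl(2\lambda/(1-\lambda)\bigr)=\log\bigl((1+\lambda)/(1-\lambda)\bigr)-2\lambda=2(\arctanh\lambda-\lambda)\ge 0$. Every step checks out: $|x-p|\ge 1-|x|$ and $|y-p|\ge 1-|y|$ give the Cassinian bound, $a=(1-|x|)\wedge(1-|y|)\ge 1-\lambda$ and $b=(1-|x|)\vee(1-|y|)\ge 1-\lambda$ justify the reduction, $|x-y|\le 2\lambda$ bounds the range of $t$, and a concave function with nonnegative values at both endpoints of an interval is nonnegative on it. The trade-off is that the paper's route produces the reusable, $\lambda$-free fact $j_{\IB^n}(x,y)\ge|x-y|$ on the unit ball, whereas yours makes transparent exactly where the hypothesis $|x|\vee|y|\le\lambda$ enters (once in the Cassinian bound, once in confining $t$) and isolates the sharp one-variable inequality $\arctanh\lambda\ge\lambda$ as the only analytic input.
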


\begin{proof}
Without loss of generality, we assume that $|y|=|x|\vee |y|\leq\lambda$. For any $w\in \partial \IB^n$, we have
$$ |x-w||w-y|\ge (1-\lambda)^2,
$$
and hence,
\begin{equation}\label{eqn2}
(1-\lambda)^2c_{\IB^n}(x,y)\le |x-y|.
\end{equation}
Now,
\begin{eqnarray*}
j_{\IB^n}(x,y)
&=& \log \left(1+\frac{|x-y|}
{\delta_{\IB^n}(x)\wedge\delta_{\IB^n}(y)}\right)=\log \left(1+\frac{|x-y|}{1-|y|}\right)\\
&\ge & \frac{\displaystyle\frac{2|x-y|}{1-|y|}}{2+\displaystyle\frac{|x-y|}{1-|y|}}
\quad \left(\because \log(1+t)\ge \frac{2t}{2+t} \mbox{ for }t>0\right)\\
&= & \frac{2|x-y|}{2-2|y|+|x-y|}
\ge  |x-y|\ge (1-\lambda)^2 c_{\IB^n}(x,y),
\end{eqnarray*}
where the last two inequalities follow from the inequalities $|x-y|\leq 2|y|$ and (\ref{eqn2}) respectively.
\end{proof}

The next lemma describes the relations between the Cassinian metric and the visual angle metric of the unit ball.

\begin{lemma}
The following inequalities hold.
\begin{enumerate}
\item For $x,y\in \Bn$ we have
$$
\frac{v_\Bn(x,y)}{2} \leq \tan\frac{v_\Bn(x,y)}{2} \leq c_\Bn(x,y).
$$
\item For all $x,y\in \IB^2$ with $|x|\vee |y|\leq\lambda<1$
we have
$$c_{\IB^2}(x,y)\le \frac{2(3+\lambda^2)}{3(1-\lambda^2)(1-\lambda)^2} v_{\IB^2} (x,y).
$$
\end{enumerate}
\end{lemma}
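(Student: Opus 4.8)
The plan is to handle the two parts separately, in each case reducing the comparison to a single elementary trigonometric inequality.

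For part (1), the left-hand inequality $\tfrac12 v_\Bn(x,y)\le\tan\tfrac12 v_\Bn(x,y)$ is immediate from $t\le\tan t$ on $[0,\pi/2)$, once one observes that $v_\Bn(x,y)<\pi$: the open segment $(x,y)$ lies in the convex set $\Bn$, so no boundary point is interior to it, and since $z\mapsto\angle(x,z,y)$ is continuous on the compact set $\partial\Bn$ its maximum is attained and is strictly below $\pi$. For the right-hand inequality I would fix a point $z_0\in\partial\Bn$ realizing the visual angle, so $v_\Bn(x,y)=\angle(x,z_0,y)=:\theta$, and use that $c_\Bn(x,y)\ge|x-y|/(|x-z_0|\,|z_0-y|)$ because $c_\Bn$ is a supremum over $\partial\Bn$. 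Thus it suffices to prove the pointwise estimate $\tan(\theta/2)\le|x-y|/(|x-z_0|\,|z_0-y|)$.

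To do so, write $a=|z_0-y|$, $b=|x-z_0|$, $c=|x-y|$ and let $\alpha,\beta,\theta$ be the angles of the triangle $xyz_0$ at $x,y,z_0$, with circumradius $R'$. The law of sines gives $a=2R'\sin\alpha$, $b=2R'\sin\beta$, $c=2R'\sin\theta$, and a short computation turns the desired inequality into $R'\sin\alpha\sin\beta\le\cos^2(\theta/2)$. Since $\alpha+\beta+\theta=\pi$ we have $\cos^2(\theta/2)=\sin^2\tfrac{\alpha+\beta}2$, and the product-to-sum identities give $\sin^2\tfrac{\alpha+\beta}2-\sin\alpha\sin\beta=\tfrac12\bigl(1-\cos(\alpha-\beta)\bigr)\ge0$; hence it is enough to know $R'\le1$. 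This last point is the main obstacle: it is false for an arbitrary $z\in\partial\Bn$, but it should hold at the maximizing point $z_0$. The triangle $xyz_0$ lies in a $2$-plane $P$, and $P\cap\Bn$ is a disc of radius $\rho\le1$ on whose bounding circle $z_0$ lies while $x,y$ lie inside; by maximality the circumcircle of $xyz_0$ must be internally tangent to this bounding circle at $z_0$ (otherwise one could slide $z_0$ into the interior of the circumcircle and enlarge the inscribed angle), so the circumcircle lies inside the disc and $R'\le\rho\le1$. Making this extremal/tangency step fully rigorous is where the real work lies.

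For part (2), I would bound the two quantities from opposite sides. The upper bound on the Cassinian metric is already at hand: for $w\in\partial\IB^2$ one has $|x-w|\,|w-y|\ge(1-\lambda)^2$ when $|x|\vee|y|\le\lambda$, whence $c_{\IB^2}(x,y)\le|x-y|/(1-\lambda)^2$. For the lower bound on the visual angle I would exhibit an explicit admissible boundary point: the perpendicular bisector of $[x,y]$ meets $\partial\IB^2$ in two points, and choosing the one nearer the midpoint $m=(x+y)/2$ produces an isosceles triangle, giving $v_{\IB^2}(x,y)\ge 2\arctan\bigl(|x-y|/(2d)\bigr)$, where $d=|z-m|$ is the height. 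Here $d$ is the smaller of the two chord segments through $m$, so $d\le\sqrt{1-|m|^2}\le1$ by the power of the point $m$, and an elementary lower bound for $\arctan$ (for instance $\arctan t\ge 3t/(3+t^2)$ for $t\ge0$) together with $|x-y|\le2\lambda$ converts this into a linear lower bound $v_{\IB^2}(x,y)\ge C(\lambda)\,|x-y|$. Combining the two estimates yields the asserted inequality; the bookkeeping of constants is the only delicate point, and one should check that the chosen $\arctan$ estimate is strong enough to produce the factor $(3+\lambda^2)$ appearing in the final bound.
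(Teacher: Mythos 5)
Your proof is correct, but it takes a genuinely different route from the paper's. The paper proves both parts by chaining previously established comparisons: for part (1) it combines the inequality $\tan\frac{v_\Bn(x,y)}{2}\le\sinh\frac{\rho_\Bn(x,y)}{2}$ of \cite[Theorem~3.11]{KLVW} with its own Theorem 3.1, $\sinh\frac{\rho_\Bn(x,y)}{2}\le c_\Bn(x,y)$; for part (2) it combines $j_{\IB^2}(x,y)\le\frac{2(3+\lambda^2)}{3(1-\lambda^2)}v_{\IB^2}(x,y)$ from \cite[Theorem~3.9]{CHKV} with its Lemma 3.5, $c_{\IB^2}(x,y)\le\frac{1}{(1-\lambda)^2}j_{\IB^2}(x,y)$. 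You instead give self-contained Euclidean arguments that bypass both the hyperbolic metric and the external citations. Your part (1) is sound: the law-of-sines reduction to $R'\sin\alpha\sin\beta\le\cos^2(\theta/2)$ is correct, as is the identity $\sin^2\frac{\alpha+\beta}{2}-\sin\alpha\sin\beta=\frac12\bigl(1-\cos(\alpha-\beta)\bigr)\ge0$, so everything indeed rests on $R'\le1$ at the extremal point, and the tangency argument you flag does go through: if the circumcircle $\Gamma$ of $x,y,z_0$ crossed the circle $C=P\cap\partial\Bn$ transversally at $z_0$, an arc of $C$ near $z_0$ would lie strictly inside $\Gamma$ \emph{and on the same side of the line through $x,y$ as $z_0$} (this side condition is the one point of care, since when $\theta>\pi/2$ points inside $\Gamma$ on the opposite side see $[x,y]$ at angle only $>\pi-\theta$), and such points see $[x,y]$ at angle strictly greater than $\theta$, contradicting maximality; tangency together with $x,y\in\Gamma$ lying inside $C$ then forces $\Gamma$ into the closed disc, so $R'\le\rho\le1$. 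Your part (2) is complete as written and in fact yields the sharper constant $\frac{3+\lambda^2}{3(1-\lambda)^2}$: with $t=|x-y|/2\le\lambda$ and $d\le1$ one gets $v_{\IB^2}(x,y)\ge2\arctan t\ge\frac{6t}{3+t^2}\ge\frac{3|x-y|}{3+\lambda^2}$, and combining with $c_{\IB^2}(x,y)\le|x-y|/(1-\lambda)^2$ gives the stated bound with an extra factor $\frac{1-\lambda^2}{2}<1$ to spare, so the chosen $\arctan$ estimate is indeed strong enough. What the paper's route buys is brevity and a natural fit with its theme of comparing metrics through $\rho$ and $j$; what yours buys is independence from \cite{KLVW} and \cite{CHKV}, a direct geometric explanation of why the extremal boundary point controls the Cassinian supremum, and a strictly better constant in part (2).
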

\begin{proof}
Combining the inequality \cite[Theorem~3.11]{KLVW}
$$
\tan\frac{v_\Bn(x,y)}{2}\leq \sinh\frac{\rho_\Bn(x,y)}{2}
$$
and the inequality \eqref{rho+c}, we have that
$$
\tan\frac{v_\Bn(x,y)}{2} \leq c_\Bn(x,y).
$$
It is clear that
$$
\frac{v_\Bn(x,y)}{2} \leq \tan\frac{v_\Bn(x,y)}{2}.
$$
This proves the first part.

For the proof of the second part,
we combine the inequality \cite[Theorem~3.9]{CHKV}
$$
j_{\IB^2} (x,y)\le \frac {2(3+\lambda^2)}{3(1-\lambda^2)}v_{\IB^2}(x,y).
$$
with (\ref{cj}), we obtain
$$c_{\IB^2}(x,y)\le \frac{2(3+\lambda^2)}{3(1-\lambda^2)(1-\lambda)^2} v_{\IB^2} (x,y).
$$
Thus, the proof of our lemma is complete.
\end{proof}

Next, we compare the Cassinian metric $c_D$ with the quantity $p_D$.

\begin{theorem}\label{thrm3}
Let $x,y\in D\subsetneq \mathbb{R}^n$. Then
$$
p_{D}(x,y)\le \sqrt{2}\Big(\delta_D(x)\wedge\delta_D(y)\Big) c_D(x,y).
$$
\end{theorem}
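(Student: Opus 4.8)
The plan is to bound $c_D(x,y)$ from below by evaluating the defining supremum at a single, well-chosen boundary point, exactly as in the proof of Theorem~\ref{sec3-thm3}, and then reduce the assertion to an elementary algebraic inequality. First, the case $x=y$ is trivial since both sides vanish, so I assume $x\neq y$ and abbreviate $a=|x-y|$. Without loss of generality I suppose $\delta_D(x)\wedge\delta_D(y)=\delta_D(x)$, so that $\delta_D(x)\le\delta_D(y)$.

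Next I would choose $z\in\partial D$ realizing the boundary distance of $x$, that is $|x-z|=\delta_D(x)$. Since $z$ is admissible in the supremum defining $c_D$, and $|y-z|\le|x-y|+|x-z|=a+\delta_D(x)$ by the triangle inequality, this yields the lower bound
$$
c_D(x,y)\ge\frac{a}{|x-z||y-z|}\ge\frac{a}{\delta_D(x)\big(a+\delta_D(x)\big)}.
$$
Multiplying through by $\sqrt{2}\,\delta_D(x)$, it therefore suffices to establish that $p_D(x,y)\le\sqrt{2}\,a/\big(a+\delta_D(x)\big)$.

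Substituting the definition of $p_D$ and cancelling the common factor $a>0$, the claim reduces to
$$
\big(a+\delta_D(x)\big)^2\le 2\big(a^2+4\delta_D(x)\delta_D(y)\big).
$$
Expanding the left-hand side and using $\delta_D(y)\ge\delta_D(x)$ to bound $8\delta_D(x)\delta_D(y)\ge 8\delta_D(x)^2$, the difference of the two sides is at least $a^2-2a\,\delta_D(x)+7\delta_D(x)^2=\big(a-\delta_D(x)\big)^2+6\delta_D(x)^2\ge 0$, which completes the argument.

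The only real decision is the choice of the competing boundary point; once $z$ is taken to realize $\delta_D(x)$, everything else is a routine triangle-inequality estimate followed by squaring. The mild subtlety, and the place where the constant $\sqrt{2}$ enters, is the final quadratic inequality, in which the cross term $2a\,\delta_D(x)$ must be absorbed; here the factor $2$ under the square root provides exactly the slack required, with the hypothesis $\delta_D(x)\le\delta_D(y)$ used to control the product $\delta_D(x)\delta_D(y)$. I do not expect any genuine obstacle beyond getting this bookkeeping right.
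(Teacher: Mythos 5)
Your proof is correct and takes essentially the same route as the paper: both arguments reduce the claim to the lower bound $c_D(x,y)\ge |x-y|/\big(s(|x-y|+s)\big)$ with $s=\delta_D(x)\wedge\delta_D(y)$, combined with the estimate $p_D(x,y)\le \sqrt{2}\,|x-y|/(|x-y|+s)$. The only difference is packaging: the paper cites \cite[Lemma 3.4]{Ibr09} for the Cassinian lower bound and \cite[1.58~(13)]{AVV} for the algebraic step, whereas you prove both inline (via the nearest boundary point, and by squaring and completing the square, respectively).
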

\begin{proof}
Fix $x,y\in D$ and let $s=\delta_D(x)\wedge\delta_D(y)$. Then
\begin{eqnarray*}
p_D(x,y) &=& \frac{|x-y|}{\sqrt{|x-y|^2+4\delta_D(x)\delta_D(y)}}\le \frac{|x-y|}{\sqrt{|x-y|^2+(2s)^2}}\\
&\le&  \frac{\sqrt{2}|x-y|}{|x-y|+2s}\le \frac{\sqrt{2}|x-y|}{|x-y|+s}\\
&=& \sqrt{2}\Big(\delta_D(x)\wedge\delta_D(y)\Big) \frac{|x-y|}{\Big(\delta_D(x)\wedge\delta_D(y)\Big)\Big(|x-y|+\big(\delta_D(x)\wedge\delta_D(y)\big)\Big)}\\
&=& \sqrt{2}\Big(\delta_D(x)\wedge\delta_D(y)\Big)\Bigg[\frac{|x-y|}{\delta_D(x)\big(|x-y|+\delta_D(x)\big)}\vee\frac{|x-y|}{\delta_D(y)\big(|x-y|+\delta_D(y)\big)}\Bigg]\\
&\le& \sqrt{2}\Big(\delta_D(x)\wedge\delta_D(y)\Big) c_D(x,y),
\end{eqnarray*}
where the second inequality follows from \cite[1.58~(13)]{AVV} and the last inequality follows from \cite[Lemma 3.4]{Ibr09}.
\end{proof}

\begin{remark}
Observe that if we take the domain $D$ in Theorem~\ref{thrm3} to be the unit ball $\mathbb{B}^n$, then we can see that $p_D(x,y)\le \sqrt{2}c_D(x,y)$.
In fact, if $D$ is a bounded domain in $\mathbb{R}^n$, then $p_D(x,y)\le \big(\operatorname{diam}(D)/\sqrt{2}\big)c_D(x,y)$.
\end{remark}

\section{Distortion of the Cassinian metric under M\"obius transformations of the unit ball}

In this section we study distortion properties of the Cassinian metric $c_{\IB^n}$ of the unit ball $\IB^n$ under M\"obius transformations of $\IB^n$.
Note that the M\"obius transformations of $\IB^n$ preserve the hyperbolic metric $\rho_{\IB^n}$.

Let $\phi$ be a M\"obius transformation with $\phi(\IB^n)=\IB^n$ and put $a=\phi(0)$. If $a=0$, then $\phi$ is an orthogonal matrix, i.e., $|\phi(x)|=|x|$ for each $x\in\IB^n$. In particular,  $\phi$ preserves the Cassinian metric. That is,
\begin{equation}\label{mob}
c_{\IB^n}\big(\phi(x),\phi(y)\big)=c_{\IB^n}(x,y)\qquad\text{for all}\quad x,y\in\IB^n.
\end{equation}

Suppose now that $a\neq 0$. Let $\sigma$ be the inversion in the sphere $\mathbb S^{n-1}(a^\star,r)$, where
$$
a^\star=\frac {a}{|a|^2}\qquad\text{and}\qquad r=\sqrt{|a^\star|^2-1}=\frac {\sqrt{1-|a|^2}}{|a|}.
$$
Note that the sphere $\mathbb S^{n-1}(a^\star,r)$ is orthogonal to $\partial\IB^n$ and that $\sigma(a)=0$.
In particular, $\sigma$ is a M\"obius transformation with $\sigma(\IB^n)=\IB^n$ and
$\sigma(a)=0$. Recall that
$$
\sigma(x)=a^\star+\Big(\frac {r}{|x-a^\star|}\Big)^2\big(x-a^\star\big).
$$
Then $\sigma\circ\phi$ is an orthogonal matrix (see, for example, \cite[Theorem 3.5.1(i)]{Bea95}). 
In particular,
\begin{equation}\label{mob1}
\Big|\sigma\big(\phi(x)\big)-\sigma\big(\phi(y)\big)\Big|=|x-y|.
\end{equation}
We will need the following property of $\sigma$ (see, for example, \cite[p. 26]{Bea95}):
\begin{equation}\label{mob2}
|\sigma(x)-\sigma(y)|=\frac {r^2|x-y|}{|x-a^\star||y-a^\star|}.
\end{equation}
It follows from (\ref{mob1}) and (\ref{mob2}) that
$$
|x-y|=\Big|\sigma\big(\phi(x)\big)-\sigma\big(\phi(y)\big)\Big|=\frac {r^2|\phi(x)-\phi(y)|}{|\phi(x)-a^\star||\phi(y)-a^\star|}=\frac {(|a^\star|^2-1)|\phi(x)-\phi(y)|}{|\phi(x)-a^\star||\phi(y)-a^\star|},
$$
or equivalently,
$$
|\phi(x)-\phi(y)|=\frac {|\phi(x)-a^\star||\phi(y)-a^\star|}{|a^\star|^2-1}|x-y|.
$$
In particular, for all $x,y\in\IB^n$ and $\eta\in\partial\IB^n$ we have
\begin{equation}\label{mob3}
\frac {|\phi(x)-\phi(y)|}{|\phi(x)-\phi(\eta)||\phi(y)-\phi(\eta)|}=\frac {|x-y|}{|x-\eta||y-\eta|}\cdot\frac {|a^\star|^2-1}{|\phi(\eta)-a^\star|^2}.
\end{equation}
Note that since $\phi(\eta)\in\partial\IB^n$ and $|a^\star|>1$, we have
$$
|a^\star|-1\leq|\phi(\eta)-a^\star|\leq |a^\star|+1
$$
and hence
\begin{equation}\label{mob4}
\frac {1-|a|}{1+|a|}=\frac {|a^\star|-1}{|a^\star|+1}\leq \frac {|a^\star|^2-1}{|\phi(\eta)-a^\star|^2}\leq \frac {|a^\star|+1}{|a^\star|-1}=\frac {1+|a|}{1-|a|}.
\end{equation}

Now given $x,y\in\IB^n$, there exist $\eta_1\in\partial\IB^n$ and $\eta_2\in\partial\IB^n$ such that
$$
c_{\IB^n}\big(\phi(x),\phi(y)\big)=\frac {|\phi(x)-\phi(y)|}{|\phi(x)-\phi(\eta_1)||\phi(y)-\phi(\eta_1)|}\quad\text{and}\quad c_{\IB^n}(x,y)=\frac {|x-y|}{|x-\eta_2||y-\eta_2|}.
$$
Using (\ref{mob3}) and (\ref{mob4}) we obtain
$$
c_{\IB^n}\big(\phi(x),\phi(y)\big)=\frac {|x-y|}{|x-\eta_1||y-\eta_1|}\cdot
\frac {|a^\star|^2-1}{|\phi(\eta_1)-a^\star|^2}\leq \frac {1+|a|}{1-|a|}c_{\IB^n}(x,y)
$$
and
$$
c_{\IB^n}(x,y)=\frac {|\phi(x)-\phi(y)|}{|\phi(x)-\phi(\eta_2)||\phi(y)-\phi(\eta_2)|}\cdot\frac {|\phi(\eta_2)-a^\star|^2}{|a^\star|^2-1}\leq \frac {1+|a|}{1-|a|}c_{\IB^n}\big(\phi(x),\phi(y)\big).
$$

The constant $(1+|a|)/(1-|a|)$ can be attained for all M\"obius transformations $\phi$ with $\phi(\IB^n)=\IB^n$ and $a=\phi(0)$. 
To see this, it suffices to consider the map $\sigma$ with $\sigma(a)=0$ for $a\in[0,e_1]\setminus\{0,e_1\}$ with 
$e_1=(1,0,\cdots,0)\in \IR^n$ since $c_{\IB^n}$ is invariant under orthogonal transformations. 
Choose $x=0$ and $y=te_1, -1<t<0$. Then we have that
$$
\sigma(x)=a\qquad \mbox{and}\qquad \sigma(y)=\frac{|a|-t}{1-|a|t}e_1\in[a,e_1]\setminus\{a,e_1\}.
$$
It is easy to see by the formula (\ref{sec3-eq1}) that
$$
c_{\IB^n}(x,y)=c_{\IB^n}(0,te_1)=-\frac{t}{1+t}.
$$
Furthermore, it follows from \cite[Example~3.9(B)]{Ibr09} that
$$c_{\IB^n}(re_1,se_1)=\frac{s-r}{(1-r)(1-s)},\quad 0\le r<s<1.
$$
This gives
\begin{align*}
c_{\IB^n}(\sigma(x),\sigma(y))=c_{\IB^n}\left(a,\frac{|a|-t}{1-|a|t}e_1\right)
=\frac{\displaystyle\frac{|a|-t}{1-|a|t}-|a|}{(1-|a|)\left(1-\displaystyle\frac{|a|-t}{1-|a|t}\right)}
=-\frac{t}{1+t}\frac{1+|a|}{1-|a|}.
\end{align*}
Therefore, we get
$$
\frac{c_{\IB^n}(\sigma(x),\sigma(y))}{c_{\IB^n}(x,y)}=\frac{1+|a|}{1-|a|}.
$$

Thus, we have proved the following theorem.
\begin{theorem}
Let $\phi$ be a M\"obius transformation with $\phi(\IB^n)=\IB^n$. Then
$$
\frac {1-|\phi(0)|}{1+|\phi(0)|}c_{\IB^n}(x,y)\leq c_{\IB^n}\big(\phi(x),\phi(y)\big)\leq\frac {1+|\phi(0)|}{1-|\phi(0)|}c_{\IB^n}(x,y)
$$
for all $x,y\in\IB^n$. The equalities in both sides can be attained.
\end{theorem}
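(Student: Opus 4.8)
The plan is to reduce the two-sided estimate to a single \emph{pointwise} distortion identity for the Cassinian ratio and then pass to the supremum defining $c_{\IB^n}$. Write $a=\phi(0)$. If $a=0$ the map $\phi$ is orthogonal, hence a Euclidean isometry fixing $\partial\IB^n$, so $c_{\IB^n}$ is preserved outright by \eqref{mob}; this is consistent with the claimed constants, since $(1\pm|\phi(0)|)/(1\mp|\phi(0)|)=1$ here. Thus the substance lies in the case $a\neq 0$, where the idea is to factor $\phi$ through the inversion $\sigma$ in the sphere $\mathbb S^{n-1}(a^\star,r)$ orthogonal to $\partial\IB^n$ with $\sigma(a)=0$, so that $\sigma\circ\phi$ becomes orthogonal.

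Next I would establish the exact transformation rule for the Cassinian ratio at a fixed boundary point. Since $\sigma\circ\phi$ is an isometry we have \eqref{mob1}, and combining this with the inversion distance formula \eqref{mob2} yields, after cancelling $r^2=|a^\star|^2-1$, the identity \eqref{mob3}: for every $\eta\in\partial\IB^n$ the ratio $|\phi(x)-\phi(y)|/(|\phi(x)-\phi(\eta)||\phi(y)-\phi(\eta)|)$ equals $|x-y|/(|x-\eta||y-\eta|)$ times the conformal factor $(|a^\star|^2-1)/|\phi(\eta)-a^\star|^2$. Because $\phi(\eta)\in\partial\IB^n$ and $|a^\star|>1$, the triangle inequality $|a^\star|-1\le|\phi(\eta)-a^\star|\le|a^\star|+1$ pins this factor into the interval between $(1-|a|)/(1+|a|)$ and $(1+|a|)/(1-|a|)$; this is exactly \eqref{mob4}.

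With \eqref{mob3} and \eqref{mob4} in hand, the two inequalities follow by a compactness-and-supremum argument. As $\partial\IB^n$ is compact and, for fixed interior $x,y$ with $x\neq y$, the denominators $|x-\eta|$ and $|y-\eta|$ stay bounded away from $0$, the suprema defining $c_{\IB^n}(\phi(x),\phi(y))$ and $c_{\IB^n}(x,y)$ are attained at some $\eta_1,\eta_2\in\partial\IB^n$. Evaluating \eqref{mob3} at $\eta_1$ and bounding the conformal factor above by $(1+|a|)/(1-|a|)$ via \eqref{mob4}, while noting that $|x-y|/(|x-\eta_1||y-\eta_1|)\le c_{\IB^n}(x,y)$ because it is merely one competitor in that supremum, gives the upper estimate. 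Running the same argument from $\eta_2$, now bounding the reciprocal factor, gives the lower estimate; the case $x=y$ is trivial.

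The sharpness is the part I expect to require the most care, since it demands an explicit configuration realizing the extremal factor. Here I would use orthogonal invariance to reduce to the pure inversion $\sigma$ with $a\in[0,e_1]\setminus\{0,e_1\}$, take $x=0$ and $y=te_1$ with $-1<t<0$, and compute $\sigma(x)=a$ together with $\sigma(y)=\frac{|a|-t}{1-|a|t}e_1$. Both Cassinian distances are then computable in closed form from \eqref{sec3-eq1} and the collinear formula $c_{\IB^n}(re_1,se_1)=(s-r)/((1-r)(1-s))$, and the ratio $c_{\IB^n}(\sigma(x),\sigma(y))/c_{\IB^n}(x,y)$ collapses exactly to $(1+|a|)/(1-|a|)$, attaining the upper constant; reading the same computation for the image pair $(\sigma(x),\sigma(y))$ under the involution $\sigma=\sigma^{-1}$ produces the reciprocal ratio $(1-|a|)/(1+|a|)$ and attains the lower bound. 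The genuine obstacle in the whole argument is pinning down the exact conformal factor in \eqref{mob3} — everything downstream is bookkeeping once that identity is secured.
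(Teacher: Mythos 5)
Your proposal is correct and follows essentially the same route as the paper: factoring $\phi$ through the inversion $\sigma$ in the orthogonal sphere to obtain the exact conformal factor identity \eqref{mob3}, pinning that factor between $(1-|a|)/(1+|a|)$ and $(1+|a|)/(1-|a|)$ via \eqref{mob4}, passing to the extremal boundary points $\eta_1,\eta_2$ for the two suprema, and using the same collinear configuration $x=0$, $y=te_1$ with the pure inversion $\sigma$ for sharpness. Your only additions --- the compactness justification that the suprema are attained, and the involution $\sigma=\sigma^{-1}$ remark for attaining the lower constant --- are points the paper leaves implicit, and both are sound.
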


\section{The inner Cassinian metric}
Let $D\subsetneq \IR^n$ and $\gamma$ be a rectifiable curve in $D$.
We define the Cassinian length of $\gamma$ as
$$
c_D(\gamma)=\sup \sum_{i=0}^{n-1} c_D(\gamma(t_i),\gamma(t_{i+1}))
$$
where the supremum is taken over all partitions $(t_i)_{i=1}^n$ of $I=[a,b]$
with $t_1=a$ and $t_n=b$. Then the inner Cassinian metric is defined as
$$
\tilde{c}_D(x,y)=\inf_\gamma c_D(\gamma)=\inf_\gamma \int_\gamma \frac{|\rm{d} z|}{(\delta_D(z))^2},
$$
where the infimum is taken over all rectifiable curves $\gamma\subset D$ connecting $x$ and $y$ (see \cite{Ibr09}).
First, we establish the monotonicity property of the inner Cassinian metric.

\begin{lemma}\label{mono}
The inner Cassinian metric is monotonic with respect to domains. That is, if $D\subset D'$, then $\tilde{c}_{D'}(x,y)\leq\tilde{c}_D(x,y)$ for all $x,y\in D$.
\end{lemma}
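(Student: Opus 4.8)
The plan is to reduce the statement to two elementary monotonicity facts and then combine them through the infimum defining $\tilde{c}_D$.

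First I would record the pointwise comparison of the boundary distance functions: if $D\subset D'$, then
$$
\delta_D(z)\le\delta_{D'}(z)\qquad\text{for every }z\in D.
$$
This is immediate from the definition of $\delta_D$ as the distance from $z$ to $\partial D$, equivalently to the complement of $D$: since $\Rn\setminus D'\subset\Rn\setminus D$, the infimum defining $\delta_{D'}(z)$ ranges over a smaller set and is therefore at least as large. Squaring and inverting gives the density inequality $1/(\delta_{D'}(z))^2\le 1/(\delta_D(z))^2$ at every $z\in D$.

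Next I would observe that any rectifiable curve $\gamma$ joining $x$ and $y$ inside $D$ is automatically a rectifiable curve inside $D'$, because $D\subset D'$. Integrating the density inequality along such a fixed $\gamma$ yields
$$
c_{D'}(\gamma)=\int_\gamma\frac{|dz|}{(\delta_{D'}(z))^2}\le\int_\gamma\frac{|dz|}{(\delta_D(z))^2}=c_D(\gamma).
$$
Since this $\gamma$ is one of the admissible curves in the infimum defining $\tilde{c}_{D'}(x,y)$, we get $\tilde{c}_{D'}(x,y)\le c_{D'}(\gamma)\le c_D(\gamma)$. Taking the infimum over all rectifiable $\gamma\subset D$ connecting $x$ and $y$ then gives $\tilde{c}_{D'}(x,y)\le\inf_\gamma c_D(\gamma)=\tilde{c}_D(x,y)$, which is the claim.

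There is no genuinely hard step here; the only point requiring a moment's care is getting the direction of the distance inequality right, namely $\delta_D\le\delta_{D'}$. Both effects of shrinking the domain push in the same direction: passing from $D'$ to the smaller $D$ both enlarges the integrand density $1/\delta^2$ and shrinks the family of competing curves, so each separately can only increase the inner metric. The argument above in fact needs both ingredients -- the density comparison to bound $c_{D'}(\gamma)$ by $c_D(\gamma)$, and the inclusion of the curve families to ensure that every $D$-curve is admissible for the $D'$-infimum.
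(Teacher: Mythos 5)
Your proof is correct, and the overall skeleton matches the paper's: compare lengths of a fixed curve $\gamma\subset D$ with respect to the two domains, note that every such curve is also admissible for $D'$, and take infima. The difference is in how the key comparison $c_{D'}(\gamma)\le c_D(\gamma)$ is obtained. The paper works with the Cassinian length defined as a supremum of sums $\sum c_D(\gamma(t_i),\gamma(t_{i+1}))$ over partitions, and gets the comparison by citing the monotonicity of the two-point Cassinian metric itself ($c_{D'}(x,y)\le c_D(x,y)$ for $D\subset D'$, which is \cite[Corollary 3.2]{Ibr09} and is not entirely trivial, since $\partial D'$ need not meet $\partial D$). You instead pass to the integral representation $c_D(\gamma)=\int_\gamma |{\rm d}z|/(\delta_D(z))^2$ and reduce everything to the elementary inclusion-of-complements inequality $\delta_D(z)\le\delta_{D'}(z)$. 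Your route is more self-contained at that step, but it leans on the per-curve identification of the Cassinian length with the density integral; the paper states this identification (via \cite{Ibr09}) only at the level of the infima defining $\tilde{c}_D$, so strictly you are using a slightly stronger form of that fact --- alternatively, you could phrase your whole argument at the level of the infima of integrals, which needs nothing beyond what the paper's definition provides. Either way the argument is sound, and it is a reasonable trade: your version swaps a citation of a nontrivial monotonicity result for a use of the (also cited, but structurally built-in) integral formula.
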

\begin{proof}
Given $x,y\in D$, we have
$$
\tilde{c}_D(x,y)=\inf_\gamma c_D(\gamma),
$$
where the infimum is taken over all rectifiable curves $\gamma\subset D$ connecting $x$ and $y$. Since the Cassinian metric is monotonic (\cite[Corollary 3.2]{Ibr09}), $c_D(\gamma)\geq c_{D'}(\gamma)$ for all such $\gamma$ and, consequently,
$$
\inf_\gamma c_D(\gamma)\geq\inf_\gamma c_{D'}(\gamma).
$$
Since each such $\gamma$ also connects $x$ and $y$ in $D'$, we have
$$
\tilde{c}_{D'}(x,y)=\inf_\gamma c_{D'}(\gamma)\leq\inf_\gamma c_D(\gamma)=\tilde{c}_D(x,y),
$$
completing the proof.
\end{proof}

Next, we compute the inner Cassinian metrics in some special cases.
\begin{example}
For the punctured space $D=\IR^n\setminus \{0\}$, the inner Cassinian metric $\tilde{c}_D$ is
same as the Cassinian metric $c_D$ and is given by the formula
$$
\tilde{c}_D(x,y)=c_D(x,y)=\frac{|x-y|}{|x||y|}.
$$
\end{example}
To see this, let $f(\xi)=\xi/|\xi|^2$ be the inversion about the unit sphere $\mathbb S^{n-1}(0,1)=\{\xi\in\IR^n\colon\, |\xi|=1\}$. Then $f(D)=D$ and that $f$ is an isometry between $(D,c_D)$ and $(D,|-|)$,
where $|-|$ is the Euclidean distance in $D$ (see, \cite[Example 3.9(A)]{Ibr09}). Since the inner metric of the Euclidean metric in $D$ is the same as the Euclidean metric itself and since $(D,c_D)$ is isometric to $(D,|-|)$, we conclude that $(D,\tilde{c}_D)$ is isometric to $(D,|-|)$.
Hence $\tilde{c}_D$ is same as the Cassinian metric $c_D$. In particular, it follows from \cite[(3.1.5)]{Bea95} that
$$
\tilde{c}_D(x,y)=c_D(x,y)=|f(x)-f(y)|=\frac{|x-y|}{|x||y|}
$$
for all $x,y\in D$, as required.

\begin{example}\label{exam2}
For each $x\in \IB^n$, we have
$$
\tilde{c}_{\IB^n}(0,x)=c_{\IB^n}(0,x)=\frac{|x|}{1-|x|}.
$$
\end{example}
It follows from \cite[Theorem 3.8]{Ibr09} that the line segment $[0,x]$ is a Cassinian geodesic so that its
Cassinian length is equal to $c_{\IB^n}(0,x)$. That is,
$$
c_{\IB^n}([0,x])=c_{\IB^n}(0,x)=\frac {|x|}{1-|x|},
$$
where the last equality is derived in (\ref{sec3-eq1}).
Therefore,
$$
c_{\IB^n}(0,x)\leq \tilde{c}_{\IB^n} (0,x)=\inf_\gamma c_{\IB^n}(\gamma)\leq c_{\IB^n}([0,x]).
$$
Hence $\tilde{c}_{\IB^n}(0,x)=c_{\IB^n}(0,x)$, as required.

The following corollary is an easy consequence of Lemma~\ref{mono} and Example~\ref{exam2}.

\begin{corollary}
Given $x\in D\setminus \{\infty\}$, we have
$$
\tilde{c}_D(x,y)\leq\frac{|x-y|}{\delta_D(x)(\delta_D(x)-|x-y|)}
$$
for all $y\in D$ with $|x-y|<\delta_D(x)$.
\end{corollary}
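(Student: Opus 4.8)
The plan is to trap the point $y$ inside a ball contained in $D$ on which $\tilde{c}$ can be computed exactly, and then to push the estimate back to $D$ using the monotonicity Lemma~\ref{mono}. Concretely, set $r=\delta_D(x)$ and let $B=B(x,r)$. I would first record that $B\subset D$: if some $z$ with $|z-x|<r$ failed to lie in $D$, then the segment $[x,z]$ would meet $\partial D$ at a point closer to $x$ than $r$, contradicting the definition of $\delta_D(x)$. Since $|x-y|<r$ by hypothesis, $y\in B$ as well, and Lemma~\ref{mono} applied to $B\subset D$ gives $\tilde{c}_D(x,y)\le\tilde{c}_B(x,y)$. It therefore suffices to evaluate $\tilde{c}_B(x,y)$.

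To evaluate it, I would transfer $B$ to the unit ball by the similarity $f(z)=(z-x)/r$, which sends $B$ onto $\IB^n$ and $x$ to $0$, with $|f(y)|=|x-y|/r<1$. The one ingredient not isolated earlier is the scaling law for the inner Cassinian metric under similarities. This reads off the integral representation $\tilde{c}_D(x,y)=\inf_\gamma\int_\gamma|\mathrm{d}z|/\delta_D(z)^2$: writing $f(z)=\lambda(z-x)$ with $\lambda=1/r$, a similarity multiplies $|\mathrm{d}z|$ by $\lambda$ and $\delta_D(z)^2$ by $\lambda^2$, hence multiplies the integrand by $\lambda^{-1}$, so that $\tilde{c}_{f(B)}(f(x),f(y))=\lambda^{-1}\tilde{c}_B(x,y)$. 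With $\lambda=1/r$ this yields $\tilde{c}_{\IB^n}(0,f(y))=r\,\tilde{c}_B(x,y)$, i.e. $\tilde{c}_B(x,y)=r^{-1}\tilde{c}_{\IB^n}(0,f(y))$.

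Finally I would invoke Example~\ref{exam2}, which gives $\tilde{c}_{\IB^n}(0,w)=|w|/(1-|w|)$; this applies to $w=f(y)$ because $|f(y)|<1$. Substituting $|f(y)|=|x-y|/r$ and simplifying gives $\tilde{c}_B(x,y)=r^{-1}\cdot\frac{|x-y|/r}{1-|x-y|/r}=\frac{|x-y|}{r(r-|x-y|)}$, which is exactly $\frac{|x-y|}{\delta_D(x)(\delta_D(x)-|x-y|)}$. Chaining this with the monotonicity inequality $\tilde{c}_D(x,y)\le\tilde{c}_B(x,y)$ completes the proof. The only nonroutine point is justifying the similarity scaling law, and the cleanest route is the one above through the integral formula (equivalently, one checks directly that $c_D$ scales by $\lambda^{-1}$ and that Cassinian lengths of curves scale the same way); the inclusion $B\subset D$, the appeal to monotonicity, and the closing algebra are all immediate.
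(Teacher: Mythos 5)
Your proposal is correct and takes essentially the same route as the paper: both arguments restrict to the ball $B=B(x,\delta_D(x))$, compute $\tilde{c}_B(x,y)$ exactly by reducing to the unit-ball formula of Example~\ref{exam2}, and conclude with the monotonicity Lemma~\ref{mono}. The only difference is that you write out the similarity-scaling computation that the paper compresses into the phrase ``as in Example~\ref{exam2}.''
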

\begin{proof}
Set $B=B(x,\delta_D(x))$. Then as in Example~\ref{exam2} we obtain
$$
\tilde{c}_B(x,y)=\frac{|x-y|}{\delta_D(x)(\delta_D(x)-|x-y|)}
$$
for any $y\in B$. Now the conclusion follows from Lemma~\ref{mono}.
\end{proof}

\subsection*{Acknowledgment}
The last author is supported by the Academy of Finland project 268009.

\end{document}